\author{Sabrina Pauli}
\date{}
\title{Computing $\A^1$-Euler numbers with Macaulay2}
\newtheorem{theorem}{Theorem}[section]
\newtheorem{remark}[theorem]{Remark}
\newtheorem{lemma}[theorem]{Lemma}
\theoremstyle{definition}
\newtheorem{definition}[theorem]{Definition}
\newtheorem{example}[theorem]{Example}
\newcommand{\C}{\mathbb{C}}
\newcommand{\Z}{\mathbb{Z}}
\newcommand{\A}{\mathbb{A}}
\newcommand{\PP}{\mathbb{P}}
\newcommand{\RR}{\mathbb{R}}
\newcommand{\glob}{\mathcal{O}}
\newcommand{\Gr}{\operatorname{Gr}}
\newcommand{\Sym}{\operatorname{Sym}}
\newcommand{\GW}{\operatorname{GW}}
\newcommand{\HH}{\mathbb{H}}
\newcommand{\Tr}{\operatorname{Tr}}
\newcommand{\Spec}{\operatorname{Spec}}
\newcommand{\FF}{\mathbb{F}}
\begin{document}
\maketitle
\begin{abstract}
We use Macaulay2 for several enriched counts in $\GW(k)$.
First, we compute the count of lines on a general cubic surface
using Macaulay2 over $\mathbb{F}_p$ in $\GW(\FF_p)$ for $p$ a prime number and over $\mathbb{Q}$ in $\GW(\mathbb{Q})$.
This gives a new proof for the fact that the $\A^1$-Euler number of $\Sym^3\mathcal{S}^*\rightarrow \Gr(2,4)$ 
 is
 $15\langle1\rangle+12\langle -1\rangle$. 
Then, we compute the count of lines in $\PP^3$ meeting 4 general lines, the count of lines on a quadratic surface meeting one general line and the count of singular elements on a pencil of degree $d$-surfaces.
Finally, we provide code to compute the EKL-form and compute several $\A^1$-Milnor numbers.

\end{abstract}
\section{Introduction}
\label{intro}
In \cite{2017arXiv170801175K} Kass and Wickelgren count the lines on a smooth cubic surface as an element of the Grothendieck-Witt ring $\GW(k)$ of a field $k$ by computing the $\A^1$-Euler number of the vector bundle $\mathcal{E}:=\Sym^3\mathcal{S}^*\rightarrow \Gr(2,4)$ which is by definition the sum of the local indices, that is the local $\A^1$-degrees, of the zeros of a general section. Here, $\Gr(2,4)$ denotes the Grassmannian of lines in $\PP^3$ and $\mathcal{S}\rightarrow \Gr(2,4)$ its tautological bundle.

For a field $L$, denote by $\mathcal{E}_L$ the base change of $\mathcal{E}$ to $L$. Let $F\in \FF_p[X_0,X_1,X_2,X_3]_3$ be a random homogeneous degree 3 polynomial in 4 variables. Then $F$ defines a general cubic surface $X=\{F=0\}\subset \PP^3_{\FF_p}$ and a section $\sigma_F$ of $\mathcal{E}_{\FF_p}$ by restriction. The zeros of $\sigma_F$ are the lines on $X$.

 Let $\A^4_{\FF_p}=\Spec(\FF_p[x_1,x_2,x_3,x_4])\subset \Gr(2,4)$ be the open affine subset of the Grassmannian consisting of the lines spanned by $x_1e_1+x_3e_2+e_3$ and $x_2e_1+x_4e_2+e_4$ where $(e_1,e_2,e_3,e_4)$ is the standard basis for $\FF_p^4$. For the general cubic surface $X$, all lines on $X$ are elements of this open affine subset of $\Gr(2,4)$ and hence the $\A^1$-Euler number $e^{\A^1}(\mathcal{E}_{\FF_p})\in \GW(\FF_p)$ (or the count of lines on the cubic surface $X$) can be computed as the sum of local $\A^1$-degrees of the zeros of $\sigma_f\vert_{\A^4}=(f_1,f_2,f_3,f_4):\A^4\rightarrow \A^4$ by \cite{MR3909901}. 

The $\FF_p$-algebra $\frac{\FF_p[x_1,x_2,x_3,x_4]}{(f_1,f_2,f_3,f_4)}$ is 0 dimensional and thus there are finitely many  lines on $X$. 
Call these lines $l_1,\dots, l_n$.
By \cite[Corollary 51]{2017arXiv170801175K} the lines on a general and thus smooth cubic surface are simple. This means that the lines $l_1,\dots,l_n$ are simple zeros of $(f_1,f_2,f_3,f_4):\A^4_{\FF_p}\rightarrow \A^4_{\FF_p}$. 
It follows that $\FF_p[x_1,x_2,x_3,x_4]/I$ is isomorphic to the product of fields $F_1\times \dots \times F_n$ where $F_j=\FF_p[x_1,x_2,x_3,x_4]/\mathfrak{m}_j$ is the field of definition of $l_j$ (that is residue field of the point in $\Gr(2,4)$ corresponding to $l_j$) for $j=1,\dots,n$. 
By \cite[Lemma 9]{MR3909901} the local index at $l_j$ is equal to $\langle J_{F_j}\rangle\in \GW(F_j)$ where $J_{F_j}$ is the image of the jacobian element $J:=\det \frac{\partial f_i}{\partial x_l}$ in $F_j=\FF_p[x,y,z,w]/\mathfrak{m}_j$ and it follows that the $\A^1$-Euler number of $\Sym^3\mathcal{S}^*\rightarrow \Gr(2,4)$ is given by
\begin{equation}
\label{eq: Euler class}
e^{\A^1}(\mathcal{E}_{\FF_p})=\sum_{j=1}^n\Tr_{F_j/\FF_p}(\langle J_{F_j}\rangle)\in \GW(\FF_p).
\end{equation}

We use Macaulay2 to compute the rank and discriminant of \eqref{eq: Euler class} when $p=32003$.
The computation gives an element in $\GW(\FF_p)$ of rank 27 and discriminant $1\in \FF_p^*/(\FF_p^*)^2$. Two elements in $\GW(\FF_p)$ are equal if and only if they have the same rank and discriminant, so this determines the count of lines on a cubic surface in $\GW(\FF_p)$ completely.
 
Similarly, we use Macaulay2 to get the Gram matrix of a form representing $e^{\A^1}(\mathcal{E}_{\mathbb{Q}})\in \GW(\mathbb{Q})$ over the rational numbers $\mathbb{Q}$. We view this form as a bilinear form over the real numbers $\RR$ and compute its signature which is equal to $3$. 

By Theorem 5.8 in \cite{bachmann2020a1euler} $e^{\A^1}(\mathcal{E})=e^{\A^1}(\Sym^3\mathcal{S}^*)$
is equal to either
\begin{equation}
\label{eq: without 2}
\frac{n_{\C}+n_{\RR}}{2}\langle1\rangle+\frac{n_{\C}-n_{\RR}}{2}\langle-1\rangle\in \GW(k)
\end{equation}
or
\begin{equation}
\label{eq: with 2}
\frac{n_{\C}+n_{\RR}}{2}\langle1\rangle+\frac{n_{\C}-n_{\RR}}{2}\langle-1\rangle +\langle 2\rangle -\langle 1\rangle\in \GW(k)
\end{equation}
for $n_{\C},n_{\RR}\in \Z$ and a field $k$.
By \cite[Remark 5.7]{bachmann2020a1euler} $n_{\C}$ and $n_{\RR}$ are the Euler numbers of the real and complex bundle, respectively. The complex count $n_{\C}$ is equal to the rank of our form which is $n_{\C}=27$, and the real count is equal to the signature, so $n_{\RR}=3$. This has already been know for a long time. The complex count $n_{\C}$ is the classical result by Cayley and Salmon that there are 27 complex lines on a smooth cubic surface \cite{cayley_2009}. Segre studied the real lines on a smooth cubic surface in \cite{MR0008171}. See also \cite{MR3370064} and \cite{MR3176620} for the real count.

Since $2$ is not a square for every prime $p$, in particular not for our chosen prime 32003, we can rule out \eqref{eq: with 2} for the count of lines on a cubic surface and hence we have a new proof of the fact that 
\begin{equation}
e^{\A^1}(\Sym^3\mathcal{S}^*)=15\langle 1\rangle +12\langle -1\rangle\in \GW(k)
\end{equation}
which is the main result in \cite{2017arXiv170801175K}. 

In \cite[$\S8$]{LEVINE_2019} and \cite[Corollary]{bachmann2020a1euler} it is shown that the $\A^1$-Euler number of direct sums of symmetric powers of the dual tautological bundle on a Grassmannian is always of form \eqref{eq: without 2} when defined, using the theory of Witt-valued characteristic classes. The proof here is independent of this theory and we may also apply it to bundles which are not of this form.

Similarly, we get an enriched count of lines meeting 4 general lines in $\PP^3$ (this has already been computed in \cite{srinivasan2018arithmetic}) and of lines on a quadratic surface meeting one general line by computing the $\A^1$-Euler numbers \\$e^{\A^1}(\bigoplus_{i=1}^4\wedge^2\mathcal{S}^*\rightarrow \Gr(2,4))$ and $e^{\A^1}(\wedge^2\mathcal{S}^*\oplus \Sym^2\mathcal{S}^*\rightarrow \Gr(2,4))$, respectively. Note, that neither of these vector bundles is a direct sum of symmetric powers of the dual tautological bundle and we cannot use \cite[$\S8$]{LEVINE_2019} and \cite[Corollary]{bachmann2020a1euler} to rule out \eqref{eq: with 2}. However, we already know that the $\A^1$-Euler number of both of these bundles will be a multiple of the hyperbolic form $\HH=\langle1\rangle+\langle-1\rangle$ since they have direct summands of odd rank \cite[Proposition 12]{srinivasan2018arithmetic}.

Furthermore, we count singular elements on a pencil of degree $d$ surfaces as the $\A^1$-Euler number of $\oplus_{i=1}^4\pi_1^*\glob_{\PP^3}(d-1)\otimes \pi_2^*\glob_{\PP^1}(1)\rightarrow \PP^3\times \PP^1$.

Finally, we provide code for computing the EKL-form (see \cite{MR3909901}) which computes the local $\A^1$-degree for non-simple zeros.

In the appendices we compute the $\A^1$-Milnor numbers of several Fuchsian singularities and provide one explicit example of the Gram matrix of a form representing $e^{\A^1}(\mathcal{E}_{\FF_{11}})\in \GW(\FF_{11})$.
\section{$\A^1$-Euler numbers}
\label{section: Euler numbers}

\subsection{Definition of the $\A^1$-Euler number}
\label{subsection: Definition Euler number}
Let $k$ be a field.
We recall the definition of the $\A^1$-Euler number from \cite[$\S4$]{2017arXiv170801175K}.
Let $\pi:E\rightarrow X$ be a vector bundle of rank $r$ over a smooth scheme $X$ over $k$. Recall that a \emph{(weak) orientation} of $E$ is an isomorphism $\phi:\det E\cong L^{\otimes2}$ where $L\rightarrow X$ is a line bundle.
\begin{definition}
A \emph{relative orientation} of $E$ is an orientation of the line bundle $\operatorname{Hom}(\det TX,\det E)$, that is, an isomorphism $\phi:\operatorname{Hom}(\det TX,\det E)\xrightarrow{\cong}L^{\otimes 2}$ where $TX\rightarrow X$ denotes the tangent bundle of $X$ and $L\rightarrow X$ is a line bundle.
\end{definition}
\begin{remark}
If both the tangent bundle of $X$ and $E$ are orientable, then $E$ is relatively orientable since $\operatorname{Hom}(\det TX,\det E)\cong (\det TX)^{-1}\otimes \det E$. However, $\pi:E\rightarrow X$ can still be relatively orientable even though $E$ and $TX$ are not orientable.  
\end{remark}
Let $\pi:E\rightarrow X$ be a vector bundle over a smooth and proper scheme $X$ over $k$ equipped with a relative orientation $\phi$ and assume that $\dim X=\operatorname{rk}E=r$. 
In our examples, $X$ is either a Grassmannian of lines or projective space which both have standard coverings by open affine spaces $U\cong \A^r$. An open affine subset $U\cong\A^r$ of $X$ defines local coordinates, that is a trivialization of $TX\vert_U$.
\begin{remark}
For general $X$ one can find `Nisnevich coordinates' around any closed point $x$ of $X$ with residue field $k(x)$ separable over $k$. These Nisnevich coordinates induce a trivialization of the tangent bundle around $x$ \cite[Definition 17 and Lemma 18]{2017arXiv170801175K}.
\end{remark}
\begin{definition}
A trivialization of $E\vert_U$ with $U\cong \A^r$ is \emph{compatible} with the relative orientation $\phi$ and the local coordinates on $X$ if the element of $\operatorname{Hom}(\det TX\vert_U,\det E\vert_U)$ sending the distinguished basis element of $\det TX\vert_U$ to the distinguished element of $\det E\vert_U$, is sent to a square by $\phi$.
\end{definition}
Let $\sigma:X\rightarrow E$ be a section of $E$ with an isolated zero $x\in X$.
Choose a neighborhood $x\in U\cong \A^r$ and a trivialization $E\vert_{U}\cong \A^r\times \A^r$ compatible with $\phi$ and the local coordinates defined by $U\cong \A^r$. Then the \emph{local index} $\operatorname{ind}_x\sigma$ at $x$ is the local $\A^1$-degree $\deg_x^{\A^1}(f_1,\dots,f_r)$ of $(f_1,\dots,f_r): \A^r\rightarrow \A^r$ at $x$ where \[(f_1,\dots,f_r):\A^r\xrightarrow{\sigma\vert_{\A^r}=(\operatorname{id},(f_1,\dots,f_r))}\A^r\times \A^r\xrightarrow{\pi_2} \A^r .\] 
The local $\A^1$-degree is the analog of the local degree in $\A^1$-homotopy theory. We do not use its formal definition and refer the interested reader to \cite[$\S2$]{MR3909901} for the definition and to \cite{2019arXiv190208857W} for an introduction to $\A^1$-homotopy theory.

Let $L/k$ be a finite separable field extension and let $\beta:V\times V\rightarrow L$ be a non-degenerate symmetric bilinear form over $L$. Then the \emph{trace form} $\Tr_{L/k}(\beta)$ is the form
\begin{equation}
\label{eq: trace form}
V\times V\xrightarrow{\beta}L\xrightarrow{\Tr_{L/k}} k
\end{equation}
where $\Tr_{L/k}$ denotes the field trace.
Assume $x\in X$ is simple zero, that is the \emph{jacobian element} $J(x):=\frac{\partial f_i}{\partial x_j}(x)$ at $x$ is non-zero.
If $x$ is a rational point, its local degree is equal to $\langle J(x)\rangle \in \GW(k)$. When $x$ is not rational, its local $\A^1$-degree can be computed as the trace form $\Tr_{k(x)/k}(\langle J(x)\rangle)\in\GW(k)$ of $\langle J(x)\rangle\in\GW(k(x))$ for finite separable field extensions $k(x)/k$ by \cite{MR3909901}.

\begin{remark}
When $x\in X$ is a non-simple zero, its local $\A^1$-degree can be computed as the \emph{EKL-form}  (see section \ref{sec: EKL-form}).
\end{remark}

We define the \emph{$\A^1$-Euler number} $e^{\A^1}(E,\sigma)$ with respect to a section $\sigma:X\rightarrow E$ with only isolated zeros to be sum of local indices of the zeros of $\sigma$. It turns out that $e^{\A^1}(E,\sigma)$ does not depend on the chosen section \cite[Theorem 1.1]{bachmann2020a1euler} and we can define the $\A^1$-Euler number independently of $\sigma$. 
\begin{definition}
Let $\pi:E\rightarrow X$ be a vector bundle of rank $r$ equal to the dimension of the smooth, proper scheme $X$ over a field $k$ equipped with a relative orientation, then the \emph{$\A^1$-Euler number} is defined by $e^{\A^1}(E):=e^{\A^1}(E,\sigma)$ for a section $\sigma$ with only isolated zeros.
\end{definition}

\subsection{Cubic Surfaces}
\label{subsection: cubic surfaces}
We compute the rank and discriminant of the $\A^1$-Euler number of $\mathcal{E}=\Sym^3\mathcal{S^*}\rightarrow \Gr(2,4)$ over $\FF_p$ with $p=32003$.
\begin{verbatim}
P = 32003 
FF = ZZ/P  
\end{verbatim}
We generate a random homogeneous degree 3 polynomial $F$ in 4 variables $X_0$, $X_1$, $X_2$ and $X_3$.
\begin{verbatim}
R = FF[X0,X1,X2,X3]
F = random(3,R) 
\end{verbatim}
We replace $X_0$, $X_1$, $X_2$ and $X_3$ by $x_1s+x_2$, $x_3s+x_4$, $s$ and $1$, respectively, and define $I$ to be the ideal in $C=\FF_p[x_1,x_2,x_3,x_4]$ generated by the coefficients $s^3$, $s^2$, $s$ and $1$ of $F(x_1s+x_2,x_3s+x_4,s,1)$.  That means, we let $\Spec C=\Spec (\FF_p[x_1,x_2,x_3,x_4])\subset \Gr(2,4)$ be the open affine subset consisting of the lines spanned by $x_1e_1+x_3e_2+e_3$ and $x_2e_1+x_4e_2+e_4$ for the standard basis $(e_1,e_2,e_3,e_4)$ of $\FF_p^4$. The monomials $s^3$, $s^2$, $s$ and $1$ define a basis of $\mathcal{E}\vert_{\Spec C}$ and thus a trivialization. This means $I$ is the ideal generated by $f_1,f_2,f_3,f_4$ where 
\[(f_1,f_2,f_3,f_4):\A^4\xrightarrow{\sigma\vert_{\A^4}=(\operatorname{id},(f_1,f_2,f_3,f_4))}\A^4\times \A^4\xrightarrow{\pi_2} \A^4,\] 
is equal to the restriction of the section $\sigma_F$ of $\mathcal{E}$ defined by $F$ to the chosen open affine subset $\Spec C=\Spec(\FF_p[x_1,x_2,x_3,x_4])$ in the chosen trivialization of $\mathcal{E}\vert_{\Spec C}$. 
\begin{remark} 
By \cite[Corollary 45]{2017arXiv170801175K} the vector bundle $\mathcal{E}$ is relatively orientable and the local coordinates defined by the subset $\Spec C\subset \Gr(2,4)$ and the trivialization $\mathcal{E}\vert _U$ defined above, are compatible with this relative orientation.
\end{remark}
\begin{verbatim}
C=FF[x1,x2,x3,x4]
S=C[s]
g={x1*s+x2,x3*s+x4,s,1} 
m=map(S,R,g) 
I=sub(ideal flatten entries last coefficients m F, C) 
\end{verbatim}
We compute the dimension and degree $rk$ of $C/I=\FF_p[x_1,x_2,x_3,x_4]/I$.
\begin{verbatim}
dim I 
rk = degree I 
\end{verbatim}
Since there are in general finitely many lines on a cubic surface, the expected dimension of $C/I$ is 0. The degree gives the rank $rk$ of \eqref{eq: Euler class} which turns out to be 27 as expected.
\begin{verbatim}
i10 : dim I

o10 = 0

i11 : rk=degree I

o11 = 27
\end{verbatim}
So $C/I$ is a finite $\FF_p$-algebra of rank 27. We know that the finitely many lines $l_1,\dots,l_n$ on $\{F=0\}\subset \PP^3$ are simple zeros of $(f_1,f_2,f_3,f_4):\A^4_{\FF_p}\rightarrow \A^4_{\FF_p}$ \cite[Corollary 51]{2017arXiv170801175K}.\\ This implies that $C/I=\FF_p[x_1,x_2,x_3,x_4]/I$ is isomorphic to the product of fields \begin{align*}
\FF_p[x_1,x_2,x_3,x_3]/\mathfrak{m}_1\times \dots \times \FF_p[x_1,x_2,x_3,x_4]/\mathfrak{m}_n= F_1\times\dots\times F_n
\end{align*} where $\mathfrak{m}_i$ is maximal ideal defining $l_i$ as point in $\Gr(2,4)$ and $F_i$ is the field of definition of $l_i$, i.e., the residue field of $l_i$ in $\Gr(2,4)$, for $i=1,\dots,n$. We use a primary decomposition of $I$ to find the $\mathfrak{m}_i$.

\begin{verbatim}
L = primaryDecomposition I
n = length L
\end{verbatim}
\begin{remark}
Since the ideals $\mathfrak{m}_i$ are actually primes, the primary ideals in the primary decomposition are the (unique) minimal primes and we can let Macaulay2 compute the minimal primes instead of the the primary decomposition of $I$. This is much more time efficient. However, for a non-smooth cubic surface, $(f_1,f_2,f_3,f_4):\A^4_{\FF_p}\rightarrow \A^4_{\FF_p}$ could have have non-simple zeros and we would need to use the primary decomposition and the EKL-form (see section \ref{sec: EKL-form}) for the non-reduced factors of $C/I$ to find the count of lines on the cubic surface.
\end{remark}
The contribution of the line $l_i$ to \eqref{eq: Euler class} is $\Tr_{F_i/\FF_p}(\langle J_{F_i}\rangle)$ where $J_{F_i}$ is the image of the jacobian element $J=\det \frac{\partial f_m}{\partial x_j}$ of $I$ in $F_i=C/\mathfrak{m}_i$. 
The discriminant of \eqref{eq: Euler class} is the product of the discriminants of the forms $\Tr_{F_i/\FF_p}(\langle J_{F_j}\rangle)$.
By \cite[Lemma 58]{2017arXiv170801175K} the discriminant of $\Tr_{F_i/\FF_p}(\langle J_{F_i}\rangle)$ is a square in $\FF_p$ if $J_{F_i}$ is a square in $F_i=\FF_p[x_1,x_2,x_3,x_4]/\mathfrak{m}_i$ when the degree $[F_i:\FF_p]$ is odd and if $J_{F_i}$ is a non-square in $F_i=\FF_p[x_1,x_2,x_3,x_4]/\mathfrak{m}_i$ when $[F_i:\FF_p]$ is even.
Since the units $\FF_q^*$ of a finite field $\FF_q$ with $q$ elements form the cyclic group of order $q-1$, $\FF_q^*/(\FF_q^*)^2$ is isomorphic to $\Z/2\Z$. By Fermat's little theorem $b^{q-1}\equiv 1 \mod q$ for $b\in \FF_q^*$ and $b$ is a square if and only if $b^{\frac{q-1}{2}}\equiv 1 \mod q$. So to find the discriminant of \eqref{eq: Euler class} we compute the product \[\operatorname{disc}(\eqref{eq: Euler class})=\prod_{i=1}^n\epsilon_iJ_{F_i}^{\frac{p^{[F_i:\FF_p]}-1}{2}}\]
where $\epsilon=-1$ when $[F_i:\FF_p]$ is even and $\epsilon=1$ when $[F_i:\FF_p]$ is odd.
\begin{verbatim}
J = determinant jacobian I
disc = 1_FF; i=0; while i<n do 
(if even degree L_i 
then 
disc=disc*lift(J_(C/L_i)^((P^(degree L_i)-1)//2),FF)*(-1)_FF
else 
disc=disc*lift(J_(C/L_i)^((P^(degree L_i)-1)//2),FF); i=i+1)
disc
\end{verbatim}
The discriminant of \eqref{eq: Euler class} is a square.
\begin{verbatim}
i17 : disc

o17 = 1
\end{verbatim}

\subsection{The trace form}
\label{subsec: trace form}

The trace form \eqref{eq: trace form} can also be defined when $L$ is a finite \'{e}tale $k$-algebra, i.e., a product $L\cong L_1\times\dots \times L_s$ of finitely many finite separable field extensions $L_1,\dots,L_s$ of $k$. 
For example $C/I=\frac{\FF_p[x_1,x_2,x_3,x_4]}{(f_1,f_2,f_3,f_4)}$ is a finite \'{e}tale algebra isomorphic to $F_1\times\dots\times F_n$ and the trace form $\Tr_{(C/I)/\FF_p}(\langle J_{C/I}\rangle)$ is equal to $\sum_{i=1}^n\Tr_{F_i/\FF_p}(J_{F_i})$ with the notation from subsection \ref{subsection: cubic surfaces} and where $J_{C/I}$ is the image of the jacobian element $J$ in $C/I$.

The following code computes the trace form $\Tr_{L/k}(\langle J_{C/I}\rangle)$ where $FF$ is a field and $I$ an ideal in a polynomial ring $C$ over $FF$ such that $C/I$ is a finite \'{e}tale algebra over $FF$ and $J\in C$.
\begin{verbatim}
traceForm = (C,I,J,FF) -> (
B:=basis(C/I);
r:=degree I;
Q:=(J_(C/I))*(transpose B)*B;
toVector := q -> last coefficients(q,Monomials=>B);
fieldTrace := q -> (M:=toVector(q*B_(0,0));i=1;while i<r do 
(M=M|(toVector (q*B_(0,i))) ; i=i+1); trace M);
matrix applyTable(entries Q, q->lift(fieldTrace q,FF)))
\end{verbatim}

\subsubsection{Lines meeting four general lines in $\PP^3$}
\label{subsection: lines meeting 4 lines}
As an example we compute the count of lines meeting 4 general lines in $\PP^3$, i.e., we compute the $\A^1$-Euler number of the bundle $\mathcal{E}_2:=\wedge^2\mathcal{S}^*\oplus\wedge^2\mathcal{S}^*\oplus\wedge^2\mathcal{S}^*\oplus\wedge^2\mathcal{S}^*\rightarrow \Gr(2,4) $. We know from \cite{srinivasan2018arithmetic} that this is equal to the \emph{hyperbolic form} $\HH:=\langle1\rangle+\langle-1\rangle$.

Clearly, $\det(\wedge^2\mathcal{S}^*\oplus\wedge^2\mathcal{S}^*\oplus\wedge^2\mathcal{S}^*\oplus\wedge^2\mathcal{S}^*)\cong (\wedge^2\mathcal{S}^*)^{\otimes 4}$  and thus the vector bundle $\mathcal{E}_2$ is orientable. The Grassmannian $\Gr(2,4)$ is orientable aswell (i.e., its tangent bundle $T\Gr(2,5)\cong \mathcal{S}^*\otimes \mathcal{Q}$ is orientable). Those two orientations yield a canonical relative orientation $\phi:\operatorname{Hom}(T\Gr(2,4),\mathcal{E}_2)\cong L^{\otimes 2}$ of $\mathcal{E}_2 $.
Over the open affine subset $\Spec(\FF_p[x_1,x_2,x_3,x_4])\subset\Gr(2,4)$ from subsection \ref{subsection: cubic surfaces}, the dual tautological bundle $\mathcal{S}^*\rightarrow \Gr(2,4)$ has basis the two monomials $s$ and $1$ (where $s$ is again the variable on the line). This basis induces a trivialization of the restriction of $\mathcal{E}_2$ to $\Spec(\FF_p[x_1,x_2,x_3,x_4])$.
By \cite[Lemma 4]{srinivasan2018arithmetic} the relative orientation $\phi$ is compatible with the chosen local coordinates and trivialization of $\mathcal{E}$.

Let $l_1,\dots,l_4$ be 4 general lines in $\PP^3$ and let $a_i,b_i$ be two independent linear forms cutting out $l_i$ for $i=1,\dots,4$.
\begin{verbatim}
a1=random(1,R)
b1=random(1,R)
a2=random(1,R)
b2=random(1,R)
a3=random(1,R)
b3=random(1,R)
a4=random(1,R)
b4=random(1,R)
\end{verbatim}
The linear forms $a_i$ and $b_i$ define a section $s_i:=a_i\wedge b_i$ of $\wedge^2\mathcal{S}^*$. A line $l$ in $\PP^3$ meets the line $l_i$ if and only if $s_i(l)=0$ by \cite[Lemma 5]{srinivasan2018arithmetic}. 
\normalsize
\begin{verbatim}
s1 = lift((last coefficients m a1)_(0,0)*(last coefficients m b1)_(1,0)
-(last coefficients m a1)_(1,0)*(last coefficients m b1)_(0,0),C)
s2 = lift((last coefficients m a2)_(0,0)*(last coefficients m b2)_(1,0)
-(last coefficients m a2)_(1,0)*(last coefficients m b2)_(0,0),C)
s3 = lift((last coefficients m a3)_(0,0)*(last coefficients m b3)_(1,0)
-(last coefficients m a3)_(1,0)*(last coefficients m b3)_(0,0),C)
s4 = lift((last coefficients m a4)_(0,0)*(last coefficients m b4)_(1,0)
-(last coefficients m a4)_(1,0)*(last coefficients m b4)_(0,0),C)
I2 = ideal(s1,s2,s3,s4)
J2 = determinant jacobian I2
traceForm(C,I2,J2,FF)
\end{verbatim}
\normalsize
Let $I_2$ be the ideal generated by the sections $s_1,\dots,s_4$ and $J_2:=\det \frac{\partial s_i}{\partial
x_j}$.
We compute the trace form $\Tr_{(C/I_2)/FF}(\langle J_2\rangle)$ (where $C$ is still $\FF_p[x_1,x_2,x_3,x_4]$) and get a form of rank 2 and discriminant $-1\in \FF_p^*/(\FF_p^*)^2$ as expected.

\subsubsection{Lines on a degree 2 hypersurface in $\PP^3$ meeting 1 general line}
\label{subsection: Lines on a degree 2 hypersurface meeting 1 line}
We compute the count of lines on a quadratic surface meeting a general line as the $\A^1$-Euler number of $\mathcal{E}_2:=\Sym^2\mathcal{S}^*\oplus \wedge^2\mathcal{S^*}\rightarrow \Gr(2,4)$.

There is a canonical isomorphism $\det (\Sym^2\mathcal{S}^*\oplus \wedge^2\mathcal{S^*})\cong (\det \mathcal{S}^*)^{\otimes 4}$. So $\mathcal{E}_3$ is orientable. Since $\Gr(2,4)$ is orientable, too (there are canonical isomorphisms $\det T\Gr(2,4)\cong\det (\mathcal{S}^*\otimes\mathcal{Q})\cong (\det \mathcal{S}^*)^{\otimes 2}\otimes(\det\mathcal{Q})^{\otimes 2}$), we get a canonical relative orientation on $\mathcal{E}_3$. 
We choose the same local coordinates on $\Gr(2,4)$ as above and find a trivialization of $\mathcal{E}_3\vert_{\Spec C}$:
As in \cite[Definition 39]{2017arXiv170801175K}
we define a basis $\tilde{e}_1=e_1$, $\tilde{e}_2=e_2$, $\tilde{e}_3=x_1e_1+x_3e_2+e_3$ and $\tilde{e}_4=x_2e_1+x_4e_2+e_4$ of $\FF_p[x_1,x_2,x_3,x_4]^4$, and let $\tilde{\phi}_1$, $\tilde{\phi}_2$, $\tilde{\phi}_3$ and $\tilde{\phi}_4$ be its dual basis. Here $e_1,e_2,e_3,e_4$ is a basis of $\FF_p^4$.
Then the open affine subset of lines spanned by $x_1e_1+x_3e_2+e_3$ and $x_2e_1+x_4e_2+e_4$, $U=\Spec \FF_p[x_1,x_2,x_3,x_4]\subset \Gr(2,4)$, yields a basis
\begin{equation}
\label{eq: basis tangent}
\tilde{\phi}_3\otimes \tilde{e}_1,\tilde{\phi}_4\otimes \tilde{e}_1,\tilde{\phi}_3\otimes \tilde{e}_2,\tilde{\phi}_4\otimes \tilde{e}_2
\end{equation}
of $T\Gr(2,4)\vert_U$ and a basis
\begin{equation}
\label{eq: basis bundle}
(\tilde{\phi}_3^2,0),(\tilde{\phi}_3\tilde{\phi}_4,0),(\tilde{\phi}_4^2,0),(0,\tilde{\phi}_3\wedge\tilde{\phi}_4)
\end{equation}
of $\mathcal{E}_2\vert_U$. 
\begin{lemma}
The coordinates defined by the basis \eqref{eq: basis tangent} (which are equal to the coordinates chosen in subsection \ref{subsection: cubic surfaces}) and the trivialization of $\mathcal{E}_3\vert_U$ defined by the basis \eqref{eq: basis bundle} are compatible with the canonical relative orientation of $\mathcal{E}_3$ described above.

\end{lemma}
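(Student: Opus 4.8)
The plan is to unwind the definition of compatibility and reduce the assertion to the statement that a single scalar in $\FF_p^*$ is a square. By the definition of compatibility recalled above, I must exhibit the element $\psi$ of $\operatorname{Hom}(\det T\Gr(2,4)\vert_U,\det\mathcal{E}_3\vert_U)$ that sends the distinguished generator $v$ of $\det T\Gr(2,4)\vert_U$ (the wedge of the ordered basis \eqref{eq: basis tangent}) to the distinguished generator $w$ of $\det\mathcal{E}_3\vert_U$ (the wedge of the ordered basis \eqref{eq: basis bundle}), and then verify that $\phi(\psi)$ is a square in $L^{\otimes 2}\vert_U$.

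First I would make the canonical relative orientation explicit. Using $\operatorname{Hom}(\det T\Gr(2,4),\det\mathcal{E}_3)\cong(\det T\Gr(2,4))^{-1}\otimes\det\mathcal{E}_3$ together with the canonical isomorphisms $\det T\Gr(2,4)\cong(\det\mathcal{S}^*)^{\otimes 2}\otimes(\det\mathcal{Q})^{\otimes 2}$ and $\det\mathcal{E}_3\cong(\det\mathcal{S}^*)^{\otimes 4}$ recorded above, one finds $L=\det\mathcal{S}^*\otimes(\det\mathcal{Q})^{-1}$ and that $\phi$ is the tautological squaring isomorphism. Over $U$ I trivialize $\det\mathcal{S}^*\vert_U$ by $\alpha:=\tilde\phi_3\wedge\tilde\phi_4$ and $\det\mathcal{Q}\vert_U$ by $\beta:=\bar{\tilde e}_1\wedge\bar{\tilde e}_2$ (the bar denoting the image in the quotient bundle $\mathcal{Q}$), so that $L\vert_U$ is trivialized by $\alpha\otimes\beta^{-1}$ and $L^{\otimes 2}\vert_U$ by $(\alpha\otimes\beta^{-1})^{\otimes 2}$.

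Next I would push the two distinguished generators through the determinant isomorphisms in these coordinates. Under $\det(\mathcal{S}^*\otimes\mathcal{Q})\cong(\det\mathcal{S}^*)^{\otimes 2}\otimes(\det\mathcal{Q})^{\otimes 2}$ the wedge of \eqref{eq: basis tangent} becomes $\epsilon_1\,\alpha^{\otimes 2}\otimes\beta^{\otimes 2}$ for a sign $\epsilon_1=\pm 1$ coming from the reordering of the tensor basis $\tilde\phi_i\otimes\tilde e_j$. Under $\det\mathcal{E}_3\cong\det(\Sym^2\mathcal{S}^*)\otimes\det(\wedge^2\mathcal{S}^*)\cong(\det\mathcal{S}^*)^{\otimes 3}\otimes\det\mathcal{S}^*$ the wedge of \eqref{eq: basis bundle} becomes $\epsilon_2\,\alpha^{\otimes 4}$, where $\epsilon_2=\pm 1$ is the sign in $\det\Sym^2\mathcal{S}^*\cong(\det\mathcal{S}^*)^{\otimes 3}$ (the $\wedge^2$-factor contributes $\alpha$ with no extra sign, since the three $\Sym^2$-basis vectors already precede the single $\wedge^2$-basis vector in \eqref{eq: basis bundle}). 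Writing $\psi=v^{-1}\otimes w$ and using $\epsilon_1^{-1}=\epsilon_1$ then gives $\psi=\epsilon_1\epsilon_2\,(\alpha\otimes\beta^{-1})^{\otimes 2}$, so $\phi(\psi)=\epsilon_1\epsilon_2$ times the distinguished square generator of $L^{\otimes 2}\vert_U$. Since $\epsilon_1,\epsilon_2$ are constants, compatibility is therefore equivalent to the numerical identity $\epsilon_1\epsilon_2=1$.

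The main obstacle is precisely this sign bookkeeping in the two canonical determinant isomorphisms. I expect to handle the tangent-side sign $\epsilon_1$ by reusing \cite[Corollary 45]{2017arXiv170801175K}: since the local coordinates \eqref{eq: basis tangent} on $\Gr(2,4)$ are literally those of subsection \ref{subsection: cubic surfaces}, the factor $\epsilon_1$ is identical to the one appearing in the cubic-surface compatibility proof there. It then remains only to evaluate the bundle-side sign $\epsilon_2$ for $\Sym^2\mathcal{S}^*$ with the ordered basis $\tilde\phi_3^2,\tilde\phi_3\tilde\phi_4,\tilde\phi_4^2$ and to check that the product equals $1$; this is a finite linear-algebra computation entirely analogous to the one carried out for $\bigoplus\wedge^2\mathcal{S}^*$ in \cite[Lemma 4]{srinivasan2018arithmetic}.
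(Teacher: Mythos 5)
Your proposal follows essentially the same route as the paper's proof: both unwind compatibility to the statement that the element of $\operatorname{Hom}(\det T\Gr(2,4)\vert_U,\det\mathcal{E}_3\vert_U)$ sending the wedge of \eqref{eq: basis tangent} to the wedge of \eqref{eq: basis bundle} is a square, by rewriting both wedge products in terms of the generators $\tilde{\phi}_3\wedge\tilde{\phi}_4$ of $\det\mathcal{S}^*\vert_U$ and $\tilde{e}_1\wedge\tilde{e}_2$ of $\det\mathcal{Q}\vert_U$. The difference is where the actual content lands. The entire substance of the lemma is the sign in the two canonical determinant identifications; you correctly isolate it as $\epsilon_1\epsilon_2$ and reduce compatibility to showing this is $1$ (strictly, to its being a square in the relevant field, which over an arbitrary $k$ forces $+1$), but you then defer both evaluations (``I expect to handle $\epsilon_1$ by reusing Corollary 45,'' ``it remains only to evaluate $\epsilon_2$''). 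Since everything else is definition-unwinding, the proof is not complete as written: the decisive finite computation is named but not performed. The paper closes exactly this step by asserting that the wedge of \eqref{eq: basis tangent} is $(\tilde{\phi}_3\wedge\tilde{\phi}_4\otimes\tilde{e}_1\wedge\tilde{e}_2)^{\otimes 2}$ and the wedge of \eqref{eq: basis bundle} is $(\tilde{\phi}_3\wedge\tilde{\phi}_4)^{\otimes 4}$ on the nose, i.e.\ that $\epsilon_1=\epsilon_2=1$ for the given orderings, whence the Hom element is the square of $(\tilde{\phi}_3\wedge\tilde{\phi}_4\otimes\tilde{e}_1\wedge\tilde{e}_2)^{-1}\otimes(\tilde{\phi}_3\wedge\tilde{\phi}_4)^{\otimes 2}$. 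To finish your version you would verify precisely that: the ordering in \eqref{eq: basis tangent} is the Kronecker ordering for which $\det(A\otimes B)=(\det A)^2(\det B)^2$ holds without an extra sign, and the monomial ordering $\tilde{\phi}_3^2,\tilde{\phi}_3\tilde{\phi}_4,\tilde{\phi}_4^2$ realizes $\det\Sym^2 A=(\det A)^3$ without an extra sign, so indeed $\epsilon_1\epsilon_2=1$. With that computation supplied, your argument and the paper's coincide.
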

\begin{proof}
The wedge product of \eqref{eq: basis tangent} is \[(\tilde{\phi}_3\wedge\tilde{\phi}_4\otimes \tilde{e}_1\wedge\tilde{e}_2)^{\otimes 2}\in(\det \mathcal{S}^*\otimes\det\mathcal{Q})^{\otimes 2}\vert_U\] and the wedge product of \eqref{eq: basis bundle} is \[(\tilde{\phi}_3\wedge\tilde{\phi}_4)^{\otimes 4}\in (\det\mathcal{S}^*)^{\otimes 4}\vert_U\] which are both squares. It follows that 
\[(\tilde{\phi}_3\wedge\tilde{\phi}_4\otimes \tilde{e}_1\wedge\tilde{e}_2)^{\otimes -2}\otimes (\tilde{\phi}_3\wedge\tilde{\phi}_4)^{\otimes 4}\in ((\det \mathcal{S}^*\otimes\det\mathcal{Q})^{\otimes -2}\otimes (\det\mathcal{S}^*)^{\otimes 4})\vert_U\]
is a square, too.
\end{proof}
We compute $e^{\A^1}(\mathcal{E}_3)$.
\begin{verbatim}
F2 = random(2,R)
a5 = random(1,R)
b5 = random(1,R)
s5 = lift((last coefficients m a5)_(0,0)*
(last coefficients m b5)_(1,0)
-(last coefficients m a5)_(1,0)*
(last coefficients m b5)_(0,0),C)
Q = sub(ideal flatten entries last coefficients m F2, C)
I3 = Q+ideal(s5)
J3 = determinant jacobian I3
traceForm(C,I3,J3,FF)
\end{verbatim}
It is a rank 4 form of discriminant $1\in \FF_p^*/(\FF_p^*)^2$. 
When we compute the form over the real numbers $\mathbb{\RR}$ (this can be done similarly as in subsubsection \ref{subsubsection: signature E}), we get a form of signature 0. Hence, we can use \cite[Theorem 5.8]{bachmann2020a1euler} to conclude that $e^{\A^1}(\mathcal{E}_3)=2\HH$.

\begin{remark}
We know already from \cite[Proposition 12]{srinivasan2018arithmetic} that we get a multiple of $\HH=\langle1\rangle+\langle-1\rangle$ for the $\A^1$-Euler number $e^{\A^1}(E\oplus E')$ when the rank of $E$ or $E'$ is odd.
\end{remark}

\subsubsection{Signature of $e^{\A^1}(\mathcal{E})=e^{\A^1}(\Sym^3\mathcal{S}^*_{\mathbb{Q}})$}
\label{subsubsection: signature E}
Let $G$ be a random degree 3 homogeneous polynomial in 4 variables with coefficients in $\mathbb{Q}$. We now compute $e^{\A^1}(\mathcal{E}_{\mathbb{Q}},\sigma_G)\in \GW(\mathbb{Q})$ where $\sigma_G$ is the section of $\mathcal{E}$ defined by $G$. Base change yields a form over $\RR$ of which we compute the signature as the number of positive eigenvalues minus the negative eigenvalues.  
\begin{verbatim}
R2 = QQ[Y0,Y1,Y2,Y3]
G=random(3,R2)
\end{verbatim}
Exactly as before, we restrict $\sigma_G:\Gr(2,4)\rightarrow \Sym^3\mathcal{S}^*$ to \\$\Spec C_2:=\Spec(\mathbb{Q}[y_1,y_2,y_3,y_4])\subset \Gr(2,4)$ and get $(g_1,g_2,g_3,g_4):\A^4_{\mathbb{Q}}\rightarrow \A^4_{\mathbb{Q}}$ and let $I_4=(g_1,g_2,g_3,g_4)$.
\begin{verbatim}
C2 = QQ[y1,y2,y3,y4]
S2 = C2[r]
g2 = {y1*r+y2,y3*r+y4,r,1}
m2 = map(S2,R2,g2)
I4 = sub(ideal flatten entries last coefficients m2 G, C2)
J4 = determinant jacobian I4
\end{verbatim}
We compute the trace form $\Tr_{(C_2/I_4)/\mathbb{Q}}(\langle (J_4)_{C_2/I_4}\rangle)$ where $J_4$ is the jacobian element of $I_4$, and get a $27\times 27$-matrix with values in $\mathbb{Q}$. Viewing it as a form over $\RR$, its signature is equal to the number of positive eigenvalues minus the number of negative eigenvalues because any real symmetric matrix can be diagonalized orthogonally.
\begin{verbatim}
Sol=traceForm(C2,I4,J4,QQ)
E=eigenvalues Sol
sgn=0;i=0;
while i<rk do(if E_i<0 then sgn=sgn-1 else sgn=sgn+1; i=i+1)
sgn
\end{verbatim}
The signature is 3.

\begin{verbatim}
i53 : sgn

o53 = 3
\end{verbatim}
So we know that the signature of $e^{\A^1}(\mathcal{E})$ is $n_{\RR}=3$ and its rank $n_{\C}=27$. Since the discriminant of $e^{\A^1}(\mathcal{E}_{\FF_p})\in \GW(\FF_p)$ with $p=32003$ is a square (and 2 is not a square in $\FF_p$), we can conclude that 
\[e^{\A^1}(\mathcal{E})=e^{\A^1}(\Sym^3\mathcal{S}^*)=15\langle1\rangle+12\langle-1\rangle\]
applying \cite[Theorem 5.8]{bachmann2020a1euler}.

\subsubsection{Singular elements on a pencil of degree $d$ hypersurfaces in $\PP^3$}
\label{subsubsection: singular elements}

Let $\{F_t=t_0F_0+t_1F_1=0\}\subset \PP^3\times \PP^1$ be a pencil of degree $d$ surfaces in $\PP^3$. A surface in the pencil is singular if there is a point on the surface on which all 4 partial derivatives vanish simultaneously. Consider the vector bundle $\mathcal{F}:=\bigoplus_{i=1}^4\pi_1^*(\glob_{\PP^3}(d-1))\otimes \pi_2^*(\glob_{\PP^1}(1))\rightarrow\PP^3\times\PP^1$ where $\pi_1:\PP^3\times\PP^1\rightarrow \PP^3$ and $\pi_2:\PP^3\times\PP^1\rightarrow \PP^1$ are the projections to the first and second factor, respectively. A pencil $X_t=\{F_t=t_0F_0+t_1F_1=0\}\subset \PP^3\times \PP^1$ defines a section $\sigma=(\frac{\partial F_t}{\partial X_0},\dots,\frac{\partial F_t}{\partial X_3})$ of this bundle where $X_0,\dots,X_3$ are the coordinates on $\PP^3$. A general singular hypersurface of degree $d$ has a unique singularity which is an ordinary double point by \cite[Proposition 7.1 (b)]{MR3617981} and, whence, the zeros of $\sigma$ are simple and count the singular elements on the pencil $X_t$. The bundle $\mathcal{F}$ is relatively orientable since $\mathcal{F}$ and $\PP^3\times \PP^1$ are orientable, and we can enrich the count of singular elements on the pencil over $\GW(k)$. 

Let $\A^3\cong U_0\subset \PP^3$ and $\A^1\cong V_0\subset\PP^1$ be the open affine subsets where the first variable does not vanish and let $\A^4\cong U:= U_0\times V_0\subset \PP^3\times \PP^1$. One can show that $U$ and the evident trivialization of $\mathcal{F}\vert_{U}$ are compatible with the relative orientation of $\mathcal{F}$ in the same manner as in \cite[Lemma 3.10]{mckean2020arithmetic}.

\begin{example}
We provide the code for $d=2$ over the field $\FF_p$.
\begin{verbatim}
F0 = random(2,R)
F1 = random(2,R)
T = R[t]
Ft = F0+t*F1
D0 = diff(X0, Ft)
D1 = diff(X1, Ft)
D2 = diff(X2, Ft)
D3 = diff(X3, Ft)
C3 = FF[x1,x2,x3,t]
m3 = map(C3,T,{t,1,x1,x2,x3})
I5 = ideal(m3 D0,m3 D1,m3 D2,m3 D3)
J5 = determinant jacobian I5
traceForm(C3,I5,J5,FF)
\end{verbatim}
\end{example} 

For the enriched count of singular elements on a pencil of degree 2 surfaces in $\PP^3$ we get a form of rank $4$, discriminant $1\in \FF_p^*/(\FF_p^*)^2$ and signature $0$, that is the form $2\HH$.
For $d=3$, i.e., the enriched count of singular elements on a pencil of cubic surfaces, we get $16 \HH$ and for $d=4$, $54 \HH$.
\begin{remark}
Again we know by \cite[Proposition 12]{srinivasan2018arithmetic} that we get a multiple of the hyperbolic form $\HH=\langle1\rangle+\langle-1\rangle$.
\end{remark}
\begin{remark}
Proposition 7.4 in \cite{MR3617981} computes the number of singular elements on a pencil of degree $d$ hypersurfaces in $\PP^n$ over the complex numbers to be $(n+1)(d-1)^n$. Whenever $n$ is odd, the corresponding bundle is relatively orientable, and the count can be enriched in $\GW(k)$ to the form $\frac{(n+1)(d-1)^n}{2}\HH$ by \cite[Proposition 12]{srinivasan2018arithmetic}. One checks that this coincides with our count for $n=3$ and $d=2,3,4$.
\end{remark}

\begin{remark}
Levine finds a formula \cite[Corollary 12.4]{levine2017enumerative} to count singular elements as the sum of $\A^1$-Milnor numbers (see subsection \ref{subsection: Milnor numbers}) of the singularities in a more general setting. It would be interesting to find an interpretation for the local indices in our count and then compare our result to Levine's count.
\end{remark}

\section{EKL-class}
\label{sec: EKL-form}
EKL is short for \emph{Eisenbud-Khimshiashvili-Levine} who computed the local degree of non-simple, isolated zeros as the signature of a certain non-degenerate symmetric bilinear form (a representative of the EKL-class) over $\RR$ in \cite{MR467800} and \cite{Khimshiashvili77}.
Eisenbud asked whether the class represented by the EKL-form which is defined in purely algebraic terms, had a meaningful interpretation over an arbitrary field $k$. His question was answered affirmatively in \cite{MR3909901} where it is shown that the EKL-class is equal to the local $\A^1$-degree.

We recall the definition of the \emph{EKL-class} from \cite{MR3909901}. 
Let $k$ be a field. Assume that $f=(f_1,\dots,f_n):\A^n_k\rightarrow \A^n_k$ has an isolated zero at the origin and let $\mathcal{Q}:=\frac{k[x_1,\dots,x_n]_0}{(f_1,\dots,f_n)}$. Define $E:=\det a_{ij}$ where the $a_{ij}\in k[x_1,\dots,x_n]$ are chosen such that $f_i=\sum_{i=1}^na_{ij}x_j$. We call $E$ the \emph{distinguished socle element} since it generates the socle of $\mathcal{Q}$ (that is the sum of the minimal nonzero ideals) when $f$ has an isolated zero at the origin \cite[Lemma 4]{MR3909901}. 
\begin{remark}
Let $J=\det\frac{\partial f_i}{\partial x_j}$ be the jacobian element. By \cite[Korollar 4.7]{MR393056} $J=\operatorname{rank}_k \mathcal{Q}\cdot E$.
\end{remark}
Let $\phi:\mathcal{Q}\rightarrow k$ be a $k$-linear functional which sends $E$ to $1$.

\begin{definition}
The \emph{EKL-class} of $f$ is the class of $\beta_{\phi}:\mathcal{Q}\times \mathcal{Q}\rightarrow k$ defined by $\beta_{\phi}(a,b)=\phi(ab)$ in $\GW(k)$.
\end{definition}

\begin{remark}
By \cite[Lemma]{MR3909901} the EKL-class is well-defined, i.e., it does not depend on the choice of $\phi$ and $\beta_{\phi}$ is non-degenerate. One can for example choose a $k$-basis $b_1,\dots,b_{n-1},E$ for $\mathcal{Q}$ and choose $\phi$ such that $\phi(b_i)=0$ for $i=1,\dots,n-1$ and $\phi(E)=1$.
\end{remark}

\subsection{EKL-code}
\label{subsection: EKL-code}
The following code computes the EKL-form of $f:\A^n\rightarrow \A^n$ with exactly one isolated zero at the origin, $\frac{k[x_1,\ldots,x_n]}{(f_1,\ldots,f_n)}$ is a local ring and when $\operatorname{char} k$ does not divide $\operatorname{rank}_k\frac{k[x_1,\ldots,x_n]}{(f_1,\ldots,f_n)}$.
\begin{remark}
This code for the EKL form only works in a very restrictive set up. In order to get code that works in greater generality one would need to localize $\frac{k[x_1,\ldots,x_n]}{(f_1,\ldots,f_n)}$ at zero. Fortunately, this has been done by now.
This is part of the new Macaulay2 package called $\mathbb{A}^1$-Brouwer degrees. 
The author thanks Giosuè Muratore for pointing this out.
\end{remark}

The input is a triple $(C,I,FF)$ where $k=FF$ is a field and the ideal $I=(f_1,\dots,f_n)\subset C=FF[x_1,\dots,x_n]$ is a complete intersection in the polynomial ring $C$.
\begin{verbatim}
EKL=(C,I,FF)->(r=degree I;
B = basis(C/I);
B2=mutableMatrix B;
J=determinant jacobian I;
toVector = q -> last coefficients(q, Monomials=>B);
E=J_(C/I)/r;
p=0;j=0; while j<r do (if (toVector E)_(j,0)!=0 then p=j;j=j+1);
B2_(0,p)=E;
B2=matrix(B2);
Q = transpose B2 * B2;
T=mutableIdentity(C/I,r);
i=0;while i<r do (T_(i,p)=(toVector E)_(i,0) ; i=i+1);
T=matrix T;
T1=T^(-1);
linear = v -> v_(p,0);
M=matrix applyTable(entries Q,q->lift(linear(T1*(toVector q)),FF));
M)
\end{verbatim}

\subsection{$\A^1$-Milnor numbers}
\label{subsection: Milnor numbers}
Kass and Wickelgren define and compute several \emph{$\A^1$-Milnor numbers} as an application of the EKL-form in \cite{MR3909901}. Let $0\in X=\{f=0\}\subset\A^n$ be a hypersurface with an isolated singularity at the origin. Then the $\A^1$-Milnor number of $X$ is 
\[\mu^{\A^1}(f):=\deg^{\A^1}_0(\operatorname{grad}(f)).\]
Kass and Wickelgren show that the $\A^1$-Milnor number is an invariant of the singularity. When $n$ is even $\mu^{\A^1}(f)$ counts the nodes to which $X$ bifurcates (see \cite{MR3909901} for more details). They compute the $\A^1$-Milnor numbers of \emph{ADE singularities}.
\subsubsection{Du Val Singularities}
We compute the EKL class of Du Val singularities, that is simple singularities in 3 variables, in Table \ref{Du Val singularities}.

\begin{table}
\caption{Du Val singularities}
\label{Du Val singularities}       
\begin{tabular}{lll}
\hline\noalign{\smallskip}
Singularity & Equation $f$ &$\mu^{\A^1}(f)=$ EKL-class of $\operatorname{grad}(f)\in \GW(\mathbb{Q})$ \\ 
\noalign{\smallskip}\hline\noalign{\smallskip}
$A_n$, $n$ odd & $x^2+y^2+z^{n+1}$ & $\frac{n-1}{2}\HH+\langle n+1\rangle$ \\ 
 $A_n$, $n$ even & $x^2+y^2+z^{n+1}$ & $\frac{n}{2}\HH$ \\
 $D_n$, $n>1$ odd & $x^2+y^2z+z^{n-1}$ & $\frac{n-1}{2}\HH+\langle-1\rangle$ \\
 $D_n$, $n$ even & $x^2+y^2z+z^{n-1}$ & $\frac{n-2}{2}\HH+\langle-1\rangle+\langle n-1\rangle$ \\
 $E_6$& $x^2+y^3+z^4$ & $3\HH$ \\
 $E_7$& $x^2+y^3+yz^3$ & $3\HH+\langle-6\rangle$ \\
  $E_8$& $x^2+y^3+z^5$ & $4\HH$ \\
\noalign{\smallskip}\hline
\end{tabular}
\end{table}

\begin{example}
As an example we provide the computation for $E_6$.
\begin{verbatim}
C4=QQ[x,y,z]
f=x^2+y^3+z^3*y
I6=ideal(diff(x,f),diff(y,f),diff(z,f))
EKL(C4,I6,QQ)
\end{verbatim}
We get the following EKL-form.
\begin{verbatim}
i74 : EKL(C4,I6,QQ)

o74 = | 0 0    0    1 0    0    0    |
      | 0 0    0    0 1/18 0    0    |
      | 0 0    0    0 0    1/18 0    |
      | 1 0    0    0 0    0    0    |
      | 0 1/18 0    0 0    0    0    |
      | 0 0    1/18 0 0    0    0    |
      | 0 0    0    0 0    0    -1/6 |

              7        7
o74 : Matrix QQ  <--- QQ
\end{verbatim}
It is easy to see that this is $3\HH+\langle-6\rangle$.
\end{example}

\begin{appendices}

\section{An example of lines on a cubic}
As an example, we provide the Gram matrix of $e^{A^1}(\mathcal{E}_{\FF_{11}},\sigma_H)$ for 
\begin{align*}H=Z_0^3-Z0^2Z_1-Z_1^2Z_2+Z_0Z_2^2-2Z_1Z_2^2-2Z_0^2Z_3-Z_0Z_1Z_3
-Z_1^2Z_3+Z_1Z_2Z_3+Z_1Z_3^2+2Z_2Z_3^2,\end{align*} 
that is, the count of lines on the cubic surface $\{H=0\}\subset \PP^3_{\FF_{11}}$. 

\begin{verbatim}
P2 = 11
FF2 = ZZ/P2
R3 = FF2[Z0,Z1,Z2,Z3]
H = Z0^3-Z0^2*Z1-Z1^2*Z2+Z0*Z2^2-2*Z1*Z2^2-
2*Z0^2*Z3-Z0*Z1*Z3-Z1^2*Z3+Z1*Z2*Z3+Z1*Z3^2+2*Z2*Z3^2
C5 = FF2[z1,z2,z3,z4]
S3 = C5[u]
g3 = {z1*u+z2,z3*u+z4,u,1}
m4 = map(S3,R3,g3)
I7 = sub(ideal flatten entries last coefficients m4 H, C5)
L2=minimalPrimes I7
n2=length L2
J7=determinant jacobian I7
\end{verbatim}
There are 5 lines on $X$. 
\begin{verbatim}
i85 : n2=length L2

o85 = 5
\end{verbatim}
Let $F_1,\dots,F_5$ be the fields of definitions of the 5 lines. We compute the trace forms of $\Tr_{F_j/\FF_{11}}\langle J_{F_j}\rangle$ for $j=1,\dots,5$ and sum them up to get $e^{A^1}(\mathcal{E}_{\FF_{11}},\sigma_F)\in \GW(\FF_{11})$.
\begin{verbatim}
Sol2 = traceForm(C5,L2_0,J7,FF2); j=1;
while j<n2 do (Sol2=Sol2++traceForm(C5,L2_j,J7,FF2);j=j+1);
Sol2
\end{verbatim}
\newpage
\small
\begin{verbatim}
i90 = Sol2

o90 = | -3 0 0  0  0  0  0  0  0  0  0  0  0  0  0  0  0  0  0  0  0  0  0  0  0  0  0  |
      | 0  1 2  0  0  0  0  0  0  0  0  0  0  0  0  0  0  0  0  0  0  0  0  0  0  0  0  |
      | 0  2 -3 0  0  0  0  0  0  0  0  0  0  0  0  0  0  0  0  0  0  0  0  0  0  0  0  |
      | 0  0 0  -3 -4 2  0  0  0  2  -2 0  0  0  0  0  0  0  0  0  0  0  0  0  0  0  0  |
      | 0  0 0  -4 5  -5 -3 3  -5 2  3  0  0  0  0  0  0  0  0  0  0  0  0  0  0  0  0  |
      | 0  0 0  2  -5 -5 -4 -4 -5 0  5  0  0  0  0  0  0  0  0  0  0  0  0  0  0  0  0  |
      | 0  0 0  0  -3 -4 5  -5 -5 5  -2 0  0  0  0  0  0  0  0  0  0  0  0  0  0  0  0  |
      | 0  0 0  0  3  -4 -5 -5 4  0  1  0  0  0  0  0  0  0  0  0  0  0  0  0  0  0  0  |
      | 0  0 0  0  -5 -5 -5 4  3  1  0  0  0  0  0  0  0  0  0  0  0  0  0  0  0  0  0  |
      | 0  0 0  2  2  0  5  0  1  -2 2  0  0  0  0  0  0  0  0  0  0  0  0  0  0  0  0  |
      | 0  0 0  -2 3  5  -2 1  0  2  -5 0  0  0  0  0  0  0  0  0  0  0  0  0  0  0  0  |
      | 0  0 0  0  0  0  0  0  0  0  0  -3 -3 2  -4 4  -1 4  0  0  0  0  0  0  0  0  0  |
      | 0  0 0  0  0  0  0  0  0  0  0  -3 2  -5 -1 2  0  0  4  0  0  0  0  0  0  0  0  |
      | 0  0 0  0  0  0  0  0  0  0  0  2  -5 4  -3 1  5  -4 1  0  0  0  0  0  0  0  0  |
      | 0  0 0  0  0  0  0  0  0  0  0  -4 -1 -3 -2 5  5  1  1  0  0  0  0  0  0  0  0  |
      | 0  0 0  0  0  0  0  0  0  0  0  4  2  1  5  -4 -1 -1 -2 0  0  0  0  0  0  0  0  |
      | 0  0 0  0  0  0  0  0  0  0  0  -1 0  5  5  -1 -1 -2 -2 0  0  0  0  0  0  0  0  |
      | 0  0 0  0  0  0  0  0  0  0  0  4  0  -4 1  -1 -2 0  2  0  0  0  0  0  0  0  0  |
      | 0  0 0  0  0  0  0  0  0  0  0  0  4  1  1  -2 -2 2  -5 0  0  0  0  0  0  0  0  |
      | 0  0 0  0  0  0  0  0  0  0  0  0  0  0  0  0  0  0  0  0  -3 -3 0  -2 -4 4  3  |
      | 0  0 0  0  0  0  0  0  0  0  0  0  0  0  0  0  0  0  0  -3 4  -4 2  2  -4 -3 2  |
      | 0  0 0  0  0  0  0  0  0  0  0  0  0  0  0  0  0  0  0  -3 -4 3  3  -4 -3 2  -5 |
      | 0  0 0  0  0  0  0  0  0  0  0  0  0  0  0  0  0  0  0  0  2  3  4  -1 -5 -2 -3 |
      | 0  0 0  0  0  0  0  0  0  0  0  0  0  0  0  0  0  0  0  -2 2  -4 -1 -4 -2 -4 -4 |
      | 0  0 0  0  0  0  0  0  0  0  0  0  0  0  0  0  0  0  0  -4 -4 -3 -5 -2 3  -4 1  |
      | 0  0 0  0  0  0  0  0  0  0  0  0  0  0  0  0  0  0  0  4  -3 2  -2 -4 -4 3  -2 |
      | 0  0 0  0  0  0  0  0  0  0  0  0  0  0  0  0  0  0  0  3  2  -5 -3 -4 1  -2 -2 |

               27        27
o90 : Matrix FF   <--- FF
\end{verbatim}

\normalsize
The sizes of the blocks are the degrees $[F_j:\FF_{11}]$ of the field extension $F_j/\FF_{11}$ for $j=1,\dots,5$. So there is one rational line on $X$, one defined over a field extension of degree 2 and 3 lines defined over a field extension of degree 8 on $X$.
\section{More $\A^1$-Milnor numbers}

We provide $\A^1$-Milnor numbers of some Fuchsian singularities (see \cite{MR1987870}) in Table \ref{Fuchsian singularities}.
\begin{table}
\caption{Fuchsian singularities}
\label{Fuchsian singularities}       
\begin{tabular}{lll}
\hline\noalign{\smallskip}
Singularity & Equation $f$ &$\mu^{\A^1}(f)=$ EKL-class of $\operatorname{grad}(f)\in \GW(\mathbb{Q})$ \\ 
\noalign{\smallskip}\hline\noalign{\smallskip}
$E_{12}$ & $x^7+y^3+z^2$ & $6\HH$ \\ 
 $Z_{11}$ & $x^5+xy^3+z^2$ & $5\HH+\langle-6\rangle$ \\
 $Q_{10}$ & $x^4+y^3+xz^2$& $5\HH$ \\
 $E_{13}$ & $x^5y+y^3+z^2$ & $6\HH+\langle-10\rangle$ \\
 $Z_{12}$ & $x^4y+xy^3+z^2$ & $5\HH+\langle-22\rangle+\langle-66\rangle$ \\
 $Q_{11}$ & $x^3y+y^3+xz^2$ & $5\HH+\langle2\rangle$ \\ 
 $W_{12}$ & $x^5+y^4+z^2$ & $6\HH$ \\
 $S_{11}$ & $x^4+y^2z+xz^2$& $5\HH+\langle-2\rangle$ \\
 $E_{14}$ & $x^8+y^3+z^2$ & $7\HH$ \\
 $Z_{13}$ & $x^6+xy^3+z^2$ & $6\HH+\langle-6\rangle$ \\
 $Q_{12}$ & $x^5+y^3+xz^2$ & $6\HH$ \\ 
 $W_{13}$ & $x^4y+y^4+z^2$ & $6\HH+\langle-2\rangle$ \\
 $S_{12}$ & $x^3y+y^2z+xz^2$& $6\HH$ \\
 $U_{12}$ & $x^4+y^3+z^3$& $6\HH$ \\
 $J_{0,3}$ & $x^9+y^3+z^2$ & $8\HH$ \\ 
 $Z_{1,0}$ & $x^7+xy^3+z^2$ & $7\HH+\langle-6\rangle$ \\
 $Q_{2,0}$ & $x^6+y^3+xz^2$& $7\HH$ \\
 $W_{1,0}$ & $x^6+y^4+z^2$ & $7\HH+\langle3\rangle$ \\
 $S_{1,0}$ & $x^5+zy^2+xz^2$ & $7\HH$ \\
 $U_{1,0}$ & $x^3y+y^3+z^3$ & $7\HH$ \\ 
 $W_{12}$ & $x^5+y^4+z^2$ & $6\HH$ \\
 $NA_{0,0}^1$ & $x^5+y^5+z^2$& $8\HH$ \\
 $VNA_{0,0}^1$ & $x^4+y^4+yz^2$ & $7\HH+\langle-2\rangle$ \\
 $J_{4,0}$ & $x^{12}+y^3+z^2$ & $11\HH$ \\
 $Z_{2,0}$ & $x^{10}+xy^3+z^2$ & $10\HH+\langle-6\rangle$ \\
 $Q_{3,0}$ & $x^9+y^3+xz^2$ & $10\HH$ \\ 
 $X_{2,0}$ & $x^8+y^4+z^2$ & $10\HH+\langle1\rangle$ \\
 $S_{2,0}^*$ & $x^7+y^2z+xz^2$& $10\HH$ \\
 $U_{2,0}^*$ & $x^6+y^3+z^3$& $10\HH$ \\
   & $x^6+y^6+z^2$ & $12\HH+\langle2\rangle$ \\
  & $x^5+y^5+xz^2$& $12\HH$ \\
  & $x^4+y^4+z^4$& $10\HH+\langle1\rangle$ \\
\noalign{\smallskip}\hline
\end{tabular}
\end{table}

\end{appendices}


%
%

\section*{Acknowledgements}
I would like to thank Kirsten Wickelgren for introducing me to the topic and her excellent guidance and feedback on this project. I am also very grateful to Anton Leykin for introducing me to Macaulay2.
I gratefully acknowledge support by the RCN Frontier ResearchGroup Project no. 250399 “Motivic Hopf Equations.”

\bibliographystyle{alpha}
\bibliography{linebib}

\end{document}